% Template article for preprint document class `elsart'
% with harvard style bibliographic references
% SP 2001/01/05

% Review following referee's report (received March 21st, 2005).

\documentclass{amsart}

% Use the option doublespacing or reviewcopy to obtain double line spacing
% \documentclass[doublespacing]{elsart}

% the natbib package allows both number and author-year (Harvard)
% style referencing;
%\usepackage{natbib}

% if you use PostScript figures in your article
% use the graphics package for simple commands
% \usepackage{graphics}
% or use the graphicx package for more complicated commands
% \usepackage{graphicx}
% or use the epsfig package if you prefer to use the old commands
% \usepackage{epsfig}

% The amssymb package provides various useful mathematical symbols
\usepackage{amsfonts,epsfig}
\usepackage{latexsym}
\usepackage{amssymb}
\usepackage{amsmath}
\usepackage{amsthm}
\usepackage{graphics}
\usepackage{verbatim}
\usepackage{enumerate}
\usepackage{hhline}
\usepackage[all]{xy}
\usepackage{setspace}
\usepackage{array,booktabs,ctable}
\usepackage[backref]{hyperref}
\usepackage{color}
\definecolor{mylinkcolor}{rgb}{0.8,0,0}
\definecolor{myurlcolor}{rgb}{0,0,0.8}
\definecolor{mycitecolor}{rgb}{0,0,0.8}
\hypersetup{colorlinks=true,urlcolor=myurlcolor,citecolor=mycitecolor,linkcolor=mylinkcolor,linktoc=page,breaklinks=true}
%\usepackage[russian,english]{babel}
% etc
\usepackage{bigstrut}

\addtolength{\textwidth}{4cm} \addtolength{\hoffset}{-2cm}
\addtolength{\marginparwidth}{-2cm}

\newtheorem{defn}{Definition}[section]
\newtheorem{definition}[defn]{Definition}

\newtheorem{thm}[defn]{Theorem}
\newtheorem{theorem}[defn]{Theorem}

\newtheorem{prop}[defn]{Proposition}

\newtheorem{conj}[defn]{Conjecture}

\theoremstyle{definition}

\newtheorem{remark}[defn]{Remark}
\newtheorem{question}[defn]{Question}

% Some sets
%Blackboard Bold Letters:

\newcommand{\QQ}{\mathbb Q}

\newcommand{\ZZ}{\mathbb Z}

\newcommand{\FF}{\mathbb F}

\newcommand{\PP}{\mathbb P}

\newcommand{\PSL}{\operatorname{PSL}}

\newcommand{\Gal}{\operatorname{Gal}}

% Abbreviation

\newcommand{\rank}{\operatorname{rank}}

%split and non-split Cartan

\newcommand{\tor}{\mathrm{tors}}

\begin{document}

% Title, authors and addresses

% use the thanksref command within \title, \author or \address for footnotes;
% use the corauthref command within \author for corresponding author footnotes;
% use the ead command for the email address,
% and the form \ead[url] for the home page:
% \title{Title\thanksref{label1}}
% \thanks[label1]{}
% \author{Name\corauthref{cor1}\thanksref{label2}}
% \ead{email address}
% \ead[url]{home page}
% \thanks[label2]{}
% \corauth[cor1]{}
% \address{Address\thanksref{label3}}
% \thanks[label3]{}

\bibliographystyle{plain}
\title[On ranks of elliptic curves with isogenies]{On the ranks of elliptic curves with isogenies}

\author{Harris B. Daniels}
\address{Department of Mathematics and Statistics, Amherst College, Amherst, MA 01002, USA}
\email{hdaniels@amherst.edu}
\urladdr{http://www3.amherst.edu/~hdaniels/}

\author{Hannah Goodwillie}
\address{Department of Mathematics and Statistics, Amherst College, Amherst, MA 01002, USA}
\email{hannah.goodwillie@gmail.com}

%\thanks{The second author was supported by NSF grant DMS-1115455.}

%\keywords{modular curves}

\subjclass[2010]{Primary: 11G05, Secondary: 11R21, 12F10, 14H52.}

\begin{abstract} 
In recent years, the question of whether the ranks of elliptic curves defined over $\QQ$ are unbounded has garnered much attention. One can create refined versions of this question by restricting one's attention to elliptic curves over $\QQ$ with a certain algebraic structure, e.g., with a rational point of a given order. In an attempt to gather data about such questions, we look for examples of elliptic curves over $\QQ$ with an $n$-isogeny and rank as large as possible. To do this, we use existing techniques due to Rogers, Rubin, Silverberg, and Nagao and develop a new technique (based on an observation made by Mazur) that is more computationally feasible when the naive heights of the elliptic curves are large.
\end{abstract}

\maketitle

%\linespread{2}
%\doublespacing

\section{Introduction}

It is a fundamental theorem in arithmetic geometry that the rational points on an elliptic curve can be given the algebraic structure of a finitely generated abelian group. This was first proven by Mordell \cite{mordell} in 1922 for elliptic curves over $\QQ$ and then vastly generalized by Weil \cite{weil}, who proved in 1929 that the group of rational points on an abelian variety defined over a number field is finitely generated. Thus, if $E/\QQ$ is an elliptic curve, there exists an integer $r\geq0$ and finite abelian group $E(\QQ)_\tor$ such that
\[
E(\QQ)\simeq \ZZ^r\oplus E(\QQ)_\tor.
\]
The integer $r$ is called the {\it rank} of the elliptic curve and is denoted $\rank_\QQ(E)$, while $E(\QQ)_\tor$ is called the {\it torsion subgroup} of $E$. The torsion subgroups of rational elliptic curves (up to isomorphism) are already completely understood and are classified by the following theorem:
\begin{thm}[Mazur \cite{mazur1}]\label{thm-mazur}
Let $E/\QQ$ be an elliptic curve. Then
\[
E(\QQ)_\tor\simeq
\begin{cases}
\ZZ/M\ZZ &\text{with}\ 1\leq M\leq 10\ \text{or}\ M=12,\ \text{or}\\
\ZZ/2\ZZ\oplus \ZZ/2M\ZZ &\text{with}\ 1\leq M\leq 4.
\end{cases}
\]
\end{thm}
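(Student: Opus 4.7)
The plan is to recast the problem as one about rational points on modular curves. For each integer $N\geq 1$, the modular curve $X_1(N)$ is a smooth projective curve over $\QQ$ whose non-cuspidal $\QQ$-rational points correspond to isomorphism classes of pairs $(E,P)$ with $E/\QQ$ an elliptic curve and $P\in E(\QQ)$ a point of exact order $N$, and there is an analogous modular curve parametrizing elliptic curves equipped with full $2$-torsion together with a further point of order $2M$. Proving Mazur's theorem thus amounts to (i) exhibiting non-cuspidal rational points on these modular curves for the listed values, and (ii) ruling them out for every other $N$.

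For (i), I would verify case by case that $X_1(N)$ has genus zero for $N\in\{1,\ldots,10,12\}$ and that the curve parametrizing $\ZZ/2\ZZ\oplus\ZZ/2M\ZZ$ has genus zero for $M\in\{1,\ldots,4\}$, each with at least one $\QQ$-rational cusp. Any such curve is isomorphic to $\PP^1_\QQ$, and so has infinitely many rational points, yielding the familiar universal families realizing each listed torsion subgroup. For (ii), I would handle the borderline cases of low positive genus directly: for small values such as $N=11,14,15$, write down an explicit model of $X_1(N)$, determine its Mordell--Weil group by descent, and verify that all rational points are cusps. Since any torsion subgroup of $E/\QQ$ has the form $\ZZ/d_1\ZZ\oplus\ZZ/d_2\ZZ$ with $d_1\mid d_2$, these finite checks combined with the results for cyclic subgroups of prime-power order suffice to handle every composite $N$ outside the list, leaving only the case of points of large prime order.

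The main obstacle, and the essential content of Mazur's theorem, is to show that $X_1(p)(\QQ)$ contains no non-cuspidal points for every prime $p$ not appearing in the list. The strategy is to pass to the Jacobian $J_0(p)$, isolate the Eisenstein quotient $\widetilde{J}$ cut out by the Eisenstein ideal in the Hecke algebra, prove that $\widetilde{J}(\QQ)$ is finite, and then apply a formal immersion argument: composing the morphism $X_0(p)\to \widetilde{J}$ (with a suitable cuspidal base point) with reduction at a small prime $\ell$ of good reduction forces any hypothetical non-cuspidal rational point on $X_0(p)$ to specialize modulo $\ell$ to a cusp, and a lifting argument then produces a contradiction. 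The construction and fine analysis of the Eisenstein quotient---in particular, the proof that $\widetilde{J}(\QQ)$ is finite---is where essentially all of the depth of the proof is concentrated.
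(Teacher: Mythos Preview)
The paper does not supply a proof of this theorem; it is quoted as a known result and attributed to Mazur via the citation \cite{mazur1}. There is therefore nothing in the paper to compare your argument against.

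That said, your outline is a reasonable high-level sketch of Mazur's actual strategy: translate the question into the existence of non-cuspidal rational points on $X_1(N)$ (and its analogues for $\ZZ/2\ZZ\oplus\ZZ/2M\ZZ$), dispose of the genus-zero cases by exhibiting parametrizations, and treat the crucial prime cases via the Eisenstein quotient of $J_0(p)$ together with a formal immersion argument. Two small points of imprecision are worth flagging. First, you pass to $X_0(p)$ and its Jacobian, but $X_0(p)$ does have non-cuspidal rational points for several primes (e.g.\ $p=11,19,43,67,163$, coming from CM), so the formal immersion argument cannot simply conclude that $X_0(p)(\QQ)$ consists of cusps; one must keep track of what a rational point on $X_1(p)$ forces beyond merely giving a point on $X_0(p)$. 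Second, your division into ``borderline low-genus cases handled by explicit descent'' versus ``large primes handled by the Eisenstein quotient'' is a bit artificial: the Eisenstein-ideal machinery already covers $p\ge 11$ uniformly, and it is precisely the small primes $11,13,\ldots$ that motivated and require that machinery, not only the large ones. These are refinements rather than genuine gaps in a sketch at this level of detail.
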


With the torsion subgroups of rational elliptic curves completely classified, we turn our attention to the ranks of rational elliptic curves. A first natural question to ask is:
\begin{question}\label{question-bounded}
Let $S = \{ \rank_\QQ(E): E/\QQ\hbox{ is an elliptic curve}\}$. Is the set $S$ bounded above?	
\end{question}

Despite much effort, this question remains unanswered and the mathematical community seems to be split on what to expect the answer to be. Recently in \cite{PPVW}, Park, Poonen, Voight, and Wood have announced a heuristic that suggests that there are only finitely many elliptic curves of rank greater than 21, which would imply a positive answer to the above question. Their heuristic is based on modeling the ranks and Tate-Shafarevich groups simultaneously using alternating integer matrices.

In an attempt to better understand Question \ref{question-bounded}, many people have attempted to generate examples of elliptic curves with rank as large as possible. Much of the history of this endeavor has been catalogued by Dujella on his website \cite{Dujella} and the current record is an elliptic curve with rank at least 28 given by Elkies in \cite{ElkiesRR}.
% with Weierstrass equation $y^2 + xy + y = x^3 - x^2 - Ax  + B$ where 
%\begin{eqnarray*}
%&A=20067762415575526585033208209338542750930230312178956502,\\
%&B=34481611795030556467032985690390720374855944359319180361266008296291939448732243429.
%\end{eqnarray*}
Elkies was able to find 28 independent rational points of infinite order, showing that this curve has rank at least 28; with more advanced techniques, he showed that it has rank at most 32. This curve alone does not contradict the heuristic presented in \cite{PPVW} since it permits finitely many elliptic curves with rank greater than 21.

As a refinement of this problem, one could ask if the ranks of elliptic curve over $\QQ$ with some added algebraic structure are unbounded. In particular, the ranks of elliptic curves with a given torsion structure have extensively studied. For example, the current rank record for an elliptic curve defined over $\QQ$ with torsion subgroup isomorphic to $\ZZ/2\ZZ\times\ZZ/8\ZZ$ is only 3. Rather than working on the well-studied question of the ranks of elliptic curves with prescribed torsion structures, we instead modify the question once again. Instead of requiring our elliptic curves to have a fixed torsion subgroup, we ask that they have an isogeny of fixed degree.

\begin{definition}
An elliptic curve $E/\QQ$ is said to have a {\it$\QQ$-rational $n$-isogeny} (or an {\it isogeny of degree $n$}) if $E$ has a cyclic subgroup of order $n$ defined over $\overline{\QQ}$ that is stable under the component-wise action of $\Gal(\overline{\QQ}/\QQ)$.
\end{definition}

From this definition we can see that having an $n$-isogeny is a generalization of having a rational point of order $n$ since any elliptic curve with a rational point of order $n$ has a cyclic subgroup of order $n$ that is stable under the action of $\Gal(\overline{\QQ}/\QQ)$.

With the techniques described below, we are able to prove the following theorem:

\begin{theorem}
If $n$ is a positive integer such that there is an elliptic curve over $\QQ$ with a rational $n$-isogeny, then there exists an elliptic curve over $\QQ$ with an $n$-isogeny and rank greater than or equal to 5.
\end{theorem}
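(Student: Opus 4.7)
The proof strategy is by exhibition: for each admissible value of $n$, produce an explicit elliptic curve $E/\QQ$ that carries a rational $n$-isogeny and whose rank is at least $5$. The first input is the classification of admissible $n$. By Mazur's theorem on rational isogenies together with work of Kenku, the set of positive integers $n$ such that some elliptic curve over $\QQ$ admits a rational $n$-isogeny is finite and equal to
\[
\{1,2,3,4,5,6,7,8,9,10,11,12,13,14,15,16,17,18,19,21,25,27,37,43,67,163\}.
\]
So the theorem reduces to producing one curve of rank $\geq 5$ for each of these values.

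I would split the work according to the genus of the modular curve $X_0(n)$. When $X_0(n)$ has genus zero there is a one-parameter rational family of elliptic curves with a rational $n$-isogeny, parametrized by a Hauptmodul. For these $n$ I would use the established techniques of Rogers, Rubin--Silverberg, and Nagao to search through rational specializations; for most of the small $n$ on the list, rank $\geq 5$ examples are already recorded in Dujella's tables and may be quoted directly. In each case one exhibits five rational points, checks independence via the height pairing, and certifies the rank lower bound in the usual way.

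When $X_0(n)$ has positive genus and non-cuspidal rational points, i.e., for $n \in \{11,14,15,17,19,21,27,37,43,67,163\}$, only finitely many $\QQ$-isomorphism classes of elliptic curves carry a rational $n$-isogeny up to quadratic twist. Because quadratic twisting preserves the existence of a rational $n$-isogeny, I would, for each such $j$-invariant, sweep through its quadratic twist family in search of a twist of rank $\geq 5$. For the smaller values in this list the sweep can be handled by standard methods. For the large-level cases $n \in \{37,43,67,163\}$, however, the minimal Weierstrass models have very large coefficients and conventional searches become infeasible; here I would rely on the new Mazur-based technique developed in the paper, which is designed precisely for this regime.

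The main obstacle is therefore computational and concentrated in these large-level cases: producing five independent rational points on a curve of huge naive height and then certifying that the rank really is at least $5$. Candidate points can often be produced by sieving, but a direct $2$-descent on curves whose Weierstrass coefficients have dozens or hundreds of digits is typically out of reach. I expect the paper's Mazur-based technique to circumvent this obstruction by leveraging the isogeny structure to transfer the search and certification problems to a companion curve of much smaller height, which is the step that makes the theorem verifiable in practice.
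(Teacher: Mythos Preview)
Your overall strategy---reduce to the Mazur--Kenku list and then exhibit a rank $\geq 5$ curve for each $n$, splitting by the genus of $X_0(n)$ and treating the positive-genus cases via quadratic twists---matches the paper. But your account of the paper's ``Mazur-based technique'' is wrong, and this is the one place where a new idea is actually required.

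The new method has nothing to do with transferring the problem along the isogeny to a companion curve of smaller height. It is a heuristic for selecting promising twists, based on Mazur's observation that for curves of higher rank the running sum $D_E(t)=\sum_{p\le t}{\rm sgn}(a_p)$ tends to drift more negative. One computes $\min\{D_{E^{(D)}}(t):t\le T\}$ for every squarefree $D$ with $|D|$ below a bound, and then performs the full rank computation only on the twists with the most negative minimum. The reason this is useful is that the bottleneck in Rogers's method is not $2$-descent or point-certification; it is extracting the squarefree part of $f(r)$, which is as hard as integer factorization and becomes infeasible when the coefficients of $f$ are as large as in the $163$-isogeny curve. The sign-bias computation requires only point-counting over $\FF_p$ and no factoring. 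Also, this new method is invoked only for $n=163$; for $n=37,43,67$ Rogers's method already succeeds.

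There is a second, smaller gap in your genus-zero plan. For $n=18$ and $n=25$ the Nagao heuristic does single out specializations of apparently high rank, but their conductors are so large that neither their Selmer ranks nor their actual ranks can be computed in reasonable time. The paper does not certify those curves; instead it abandons the parametrized family, picks a representative of minimal conductor with the given isogeny, and applies Rogers's method to its quadratic twists. Your proposal does not anticipate this switch, and without it the cases $n=18,25$ would remain unverified.
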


\section{Elliptic Curves with Isogenies}

By studying rational points on the classical modular curves $X_0(n)$, Mazur and Kenku were able to classify all of the integers $n$ for which there is an elliptic curve defined over $\QQ$ with an $n$-isogeny. 
\begin{theorem}\label{thm-isog}\cite{mazur1,kenku2,kenku3,kenku4,kenku5}
Let $E/\QQ$ be an elliptic curve with a rational $n$-isogeny. Then
\[
n\leq 19 \text{ or } n \in\{21,25,27,37,43,67,163\}.
\]
\end{theorem}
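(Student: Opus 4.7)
The plan is to reformulate the hypothesis in terms of non-cuspidal $\QQ$-rational points on the classical modular curve $X_0(n)$, which parametrizes pairs $(E, C)$ consisting of an elliptic curve $E$ together with a cyclic subgroup $C$ of order $n$. A non-cuspidal point in $X_0(n)(\QQ)$ corresponds to an elliptic curve over $\QQ$ equipped with a $\QQ$-rational $n$-isogeny, so the theorem reduces to determining the integers $n$ for which $X_0(n)(\QQ)$ contains such a point. I would split the analysis into three cases according to the factorization type of $n$.

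For $n=p$ prime, I would invoke Mazur's theorem on rational isogenies of prime degree, which asserts that the only primes appearing are $\{2,3,5,7,11,13,17,19,37,43,67,163\}$. Mazur's argument analyzes the Eisenstein quotient of the Jacobian $J_0(p)$ and the Hecke action on it, showing that for primes $p\geq 23$ outside the exceptional list, a putative non-cuspidal rational point on $X_0(p)$ would force an arithmetic contradiction after reduction at auxiliary primes. This is the deepest input, to be cited from \cite{mazur1}.

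For $n=p^k$ with $k\geq 2$, any rational $p^k$-isogeny factors through a rational $p$-isogeny, so $p$ is already restricted to Mazur's list. For each such $p$ I would study $X_0(p^k)$ directly: the genus formula for $X_0(N)$ rules out curves of genus at least $2$ having infinitely many rational points (via Faltings), and for the remaining small cases one computes $J_0(p^k)(\QQ)$, or a suitable quotient, by descent; the surviving values are $\{4,8,9,16,25,27\}$. For composite $n=n_1 n_2$ with $\gcd(n_1,n_2)=1$, a cyclic $n$-isogeny decomposes into coprime $n_1$- and $n_2$-isogenies, both already constrained. This cuts the possibilities down to a short explicit list, and one examines $X_0(n)$ for each remaining candidate using the techniques developed by Kenku in \cite{kenku2,kenku3,kenku4,kenku5}, producing $n\in\{6,10,12,14,15,18,21\}$ as the only admissible composites with more than one prime factor.

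The principal obstacle is the first step: Mazur's theorem on prime isogenies is a landmark result whose proof introduces the Eisenstein-ideal and winding-quotient frameworks, and any self-contained treatment would require reproducing that argument essentially in full. Beyond it, the work is a finite case analysis combining genus computations, explicit equations for small $X_0(N)$, and descent on Jacobians; Kenku carries out the necessary prime-power and composite cases in the cited sequence of papers, and the final list in the theorem emerges by assembling his results with Mazur's.
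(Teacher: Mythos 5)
The paper itself offers no proof of Theorem~\ref{thm-isog}: it is quoted as a known classification, with the entire burden carried by the citations to Mazur \cite{mazur1} and Kenku \cite{kenku2,kenku3,kenku4,kenku5}. Your proposal is therefore not competing with an argument in the paper but reconstructing the strategy of the cited literature, and on that score it is essentially accurate: the translation into non-cuspidal points of $X_0(n)(\QQ)$, Mazur's Eisenstein-quotient theorem for prime degree with exceptional set $\{37,43,67,163\}$, the observation that a Galois-stable cyclic subgroup of order $p^k$ (resp.\ $n_1n_2$ with $\gcd(n_1,n_2)=1$) yields Galois-stable subgroups of the smaller orders, and a finite case analysis on the surviving prime powers and composites, which is precisely what Kenku's papers on $X_0(39)$, $X_0(65)$, $X_0(91)$, $X_0(169)$, and $X_0(125)$ carry out. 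Two caveats are worth recording. First, the appeal to Faltings is both anachronistic (Kenku's papers predate it) and beside the point: for a genus-two-or-higher curve $X_0(N)$ one must show that the non-cuspidal rational locus is \emph{empty}, not merely finite, and that requires explicit descent on a rank-zero quotient of $J_0(N)$ rather than a finiteness theorem; your subsequent sentence about computing $J_0(p^k)(\QQ)$ by descent is the step that actually does the work. Second, in the composite case the fact that $n_1$ and $n_2$ are individually admissible does not settle $n_1n_2$, since one must exhibit or exclude a single curve carrying both isogenies simultaneously; this is exactly why the curves $X_0(39)$, $X_0(65)$, $X_0(91)$ required individual treatment, and your sketch passes over it a little quickly. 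As a roadmap to the literature the proposal is sound, and, as you concede, any self-contained version is irreducibly dependent on reproducing Mazur's theorem.
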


Further, all of the $\QQ$-rational points on the curves $X_0(n)$ have been fully classified, thus classifying (and parametrizing by $j$-invariant) all of the $\overline{\QQ}$-isomorphism classes of elliptic curves with an isogeny of a given degree. This work was done by Fricke, Kenku, Klein, Kubert, Ligozat, Mazur, and Ogg, among others. The results are spread vastly throughout the literature, but they have been collected into a single set of tables by Lozano-Robledo in \cite[Tables 3 and 4]{ALR1}.

From these tables, we see that the parametrizations come in two different forms depending on the genus of $X_0(n)$: when the genus of $X_0(n)$ is positive, there are finitely many $\overline{\QQ}$-isomorphism classes of elliptic curves with $n$-isogenies, corresponding to a finite list of $j$-invariants; on the other hand, when the genus is zero, there are infinitely many $\overline{\QQ}$-isomorphism classes, with $j$-invariants parametrized as the image of a rational map. In our search for elliptic curves of moderate rank with $n$-isogenies, these two cases will require different approaches.

For some of the values of $n$ given in Theorem~\ref{thm-isog}, namely $2 \leq n \leq 10$ and $n = 12$, there exist elliptic curves with rational points of order $n$. Since an elliptic curve with a rational point of order $n$ necessarily has an $n$-isogeny, the existing rank records for elliptic curves with prescribed torsion structures carry over to our search for curves of moderate rank with $n$-isogenies. Thus, although these curves with rational points of order $n$ are not the only curves with $n$-isogenies for $2 \leq n \leq 10$ and $n = 12$, in order to avoid duplicating previous efforts we focus on the other values of $n$ given in Theorem~\ref{thm-isog}, for which no rank records have yet been established.

\section{When the genus of $X_0(n)$ is positive.}\label{sec-PositiveGenus}

To illustrate what can be done in this case, we start by giving the following definition:
\begin{definition}
Let $C_1/\QQ$ be a smooth projective curve. A twist of $C_1/\QQ$ is a smooth curve $C_2/\QQ$ such that $C_2/\QQ$ is isomorphic to $C_1$ over $\overline{\QQ}$.		
\end{definition}

If the curves $C_1$ and $C_2$ above are elliptic curves, because the isomorphism between $C_1$ and $C_2$ is defined over $\overline{\QQ}$ and \emph{not} necessarily over $\QQ$, it is possible that the set of rational points $C_1(\QQ)$ and $C_2(\QQ)$ are not isomorphic as groups. In particular, an isomorphism defined over $\overline{\QQ}$ does not necessarily send rational points to rational points, and so it is possible that the ranks of $C_1$ and $C_2$ are different.

\begin{definition}
Let $E/\QQ$ be an elliptic curve given by Weierstrass equation $y^2=x^3+Ax+B$. For a non-zero rational $D \in \QQ^{\times}$, the quadratic twist of $E$ by $D$ is the elliptic curve $E^{(D)}/\QQ$ given by $y^2=x^3+AD^2x+BD^3$.
\end{definition}

%\begin{definition}
%Let $E_1/\QQ$ and $E_2/\QQ$ be 	elliptic curves. We say that $E_2$ is a {\it quadratic twist} of $E_1$ by $D$, if there is an isomorphism from $E_1$ to $E_2$ defined over $\QQ(\sqrt{D})$, but not one defined over $\QQ$. 
%\end{definition}

It is simple to check that if $D$ is not a rational square, then $E$ and $E^{(D)}$ are isomorphic over $\QQ(\sqrt{D})$ and not isomorphic over $\QQ$.

\begin{prop}\label{prop-twist}
Let $E_1/\QQ$ and $E_2/\QQ$ be elliptic curves that are isomorphic over $\overline{\QQ}$. Further, suppose that $j(E_1) \neq 0$ or $1728$. Then, either $E_1$ and $E_2$ are isomorphic over $\QQ$, or $E_2$ is a quadratic twist of $E_1$. 
\end{prop}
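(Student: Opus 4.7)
The plan is to use the explicit description of isomorphisms between short Weierstrass models. First I would put $E_1$ and $E_2$ in short Weierstrass form $y^2 = x^3 + Ax + B$ and $y^2 = x^3 + A'x + B'$; since $j(E_1) = j(E_2) \neq 0, 1728$, none of $A, B, A', B'$ can vanish (the vanishing of $A$ forces $j = 0$ and the vanishing of $B$ forces $j = 1728$).

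Next, I would invoke the standard classification of $\overline{\QQ}$-isomorphisms between such models: any isomorphism $E_1 \to E_2$ over $\overline{\QQ}$ has the form $(x,y)\mapsto(u^2 x, u^3 y)$ for some $u \in \overline{\QQ}^{\times}$, and such a map carries $E_1$ to $E_2$ exactly when $A' = u^4 A$ and $B' = u^6 B$. This is a routine computation with the transformation law for Weierstrass equations.

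The key step is to extract $u^2$ from the pair of relations. Because all four coefficients are nonzero, I can form the ratio
\[
u^2 = \frac{u^6}{u^4} = \frac{B'/B}{A'/A} = \frac{A B'}{A' B} \in \QQ^{\times}.
\]
Setting $D := u^2 \in \QQ^{\times}$, the relations become $A' = D^2 A$ and $B' = D^3 B$, which are exactly the equations defining the quadratic twist $E_2 = E_1^{(D)}$.

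Finally I would split on whether $D$ is a rational square. If $D = d^2$ with $d \in \QQ^{\times}$, then $u = \pm d \in \QQ$, so the isomorphism $(x,y)\mapsto(u^2x, u^3 y)$ is already defined over $\QQ$, and $E_1 \cong E_2$ over $\QQ$. Otherwise $D$ is a non-square, and $E_2 = E_1^{(D)}$ is a genuine quadratic twist of $E_1$. I do not anticipate a serious obstacle here: the whole argument is bookkeeping with the isomorphism formulas, and the only subtle point is checking that the hypothesis $j \neq 0, 1728$ is precisely what is needed to ensure both $A$ and $B$ are nonzero so that the ratio $AB'/(A'B)$ makes sense.
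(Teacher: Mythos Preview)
Your proof is correct and follows essentially the same route as the paper's: both write the curves in short Weierstrass form, invoke the standard classification of isomorphisms $(x,y)\mapsto(u^2x,u^3y)$, use the hypothesis $j\neq 0,1728$ to guarantee that the coefficients are nonzero, and deduce $u^2\in\QQ^\times$ from the relations $A'=u^4A$ and $B'=u^6B$. Your version is slightly more explicit in writing out the ratio $u^2 = AB'/(A'B)$ and in splitting cleanly on whether $D=u^2$ is a rational square, but the underlying argument is identical.
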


\begin{proof}
Let $E_1$ and $E_2$ be elliptic curves given by the Weierstrass equations $y^2 = x^3+A_1x+B_1$ and $y^2 = x^3 + A_2x+B_2$ with $A_1,A_2,B_1,B_2\in\ZZ$. Since $j(E_1) \neq 0$ or $1728$ we know that $A_1$ and $B_1$ are both nonzero integers. Let $\varphi:E_1 \to E_2$ be an isomorphism defined over $\overline{\QQ}$. By Proposition 3.1 in Chapter 3 of \cite{silverman}, we have that $\varphi$ is given by $\varphi(x,y) = (u^2x,u^3y)$ for some $u\in \overline{\QQ}$. Further, this means that $A_2 = u^4 A_1$ and $B_2 = u^6 B_1$. Since $A_1,$ $A_2,$ $B_1$, and $B_2$ are all integers and $A_2$ and $B_2$ can't both be zero, it must be that $u^2$ is a nonzero rational number. Thus, either $\varphi$ is defined over $\QQ$ or it is defined over a quadratic extension of $\QQ$. 
\end{proof}

\begin{remark}
In the case where $j(E_1) = 0$ or $1728$, $E_2$ may be a cubic or quartic twist of $E_1$, rather than a quadratic one, since either $A_1 = A_2 = 0$ or $B_1 = B_2 = 0$. The curves in these two $\overline{\QQ}$-isomorphism classes have complex multiplication and do not appear in the list of curves that we are considering, so there is no need to consider these types of twists any further.
\end{remark}

\begin{prop}
Let $E_1$ and $E_2$ be elliptic curves defined over $\QQ$ such that $E_2$ is a quadratic twist of $E_1$. Then, if $E_1$ has an $n$-isogeny, then $E_2$ also has an $n$-isogeny.
\end{prop}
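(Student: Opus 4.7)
The plan is to transport the cyclic subgroup witnessing the $n$-isogeny on $E_1$ across the explicit $\overline{\QQ}$-isomorphism to $E_2$ and then verify Galois-stability by hand. Writing $E_1$ in short Weierstrass form $y^2 = x^3 + Ax + B$, the quadratic twist $E_2 = E_1^{(D)}$ has equation $y^2 = x^3 + AD^2 x + BD^3$, and a direct check shows that the map $\varphi\colon E_1 \to E_2$ given by $\varphi(x,y) = (Dx,\, D\sqrt{D}\, y)$ is an isomorphism defined over $K = \QQ(\sqrt{D})$. Let $C_1 \subset E_1(\overline{\QQ})$ be the cyclic subgroup of order $n$ giving the $n$-isogeny on $E_1$, and set $C_2 := \varphi(C_1) \subset E_2(\overline{\QQ})$. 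Since $\varphi$ is an isomorphism of curves, $C_2$ is automatically cyclic of order $n$, so all the content lies in showing that $C_2$ is stable under $G := \Gal(\overline{\QQ}/\QQ)$.

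The verification is a short case analysis on the action of $\sigma \in G$ on $\sqrt{D}$. Using that $C_1$ is already $G$-stable, I would first rewrite $\sigma(C_2) = \varphi^{\sigma}(\sigma(C_1)) = \varphi^{\sigma}(C_1)$. If $\sigma|_K$ is trivial then $\varphi^\sigma = \varphi$, and $\sigma(C_2) = C_2$ immediately. If instead $\sigma(\sqrt{D}) = -\sqrt{D}$, a direct computation gives $\varphi^\sigma(x,y) = (Dx,\, -D\sqrt{D}\,y) = [-1]_{E_2}\bigl(\varphi(x,y)\bigr)$, where $[-1]_{E_2}$ denotes the inversion morphism on $E_2$. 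Then $\sigma(C_2) = [-1](C_2) = C_2$, since the subgroup $C_2$ is closed under negation. Either way $C_2$ is $G$-stable, and so it defines a $\QQ$-rational $n$-isogeny on $E_2$.

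The main obstacle, such as it is, is just bookkeeping the Galois action on $\varphi$; once one notices that the nontrivial Galois-twist of $\varphi$ differs from $\varphi$ only by post-composition with $[-1]_{E_2}$, the argument closes because subgroups are closed under negation. The argument is insensitive to whether $j(E_1) \in \{0, 1728\}$, since the formula for $\varphi$ is equally valid when $A = 0$ or $B = 0$, so no separate treatment is required for the CM cases.
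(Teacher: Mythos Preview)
Your proof is correct and follows essentially the same approach as the paper: both transport the cyclic kernel along the $\overline{\QQ}$-isomorphism $\varphi(x,y)=(u^2x,u^3y)$ and check Galois-stability by observing that $\sigma$ changes $u^3$ by at most a sign, which amounts to post-composing with $[-1]_{E_2}$. The paper carries this out pointwise on a generator $P$ (showing $\varphi(P)^\sigma=\pm m_\sigma\varphi(P)$), whereas you phrase it as $\varphi^\sigma=[\pm 1]_{E_2}\circ\varphi$ acting on the whole subgroup; this is purely cosmetic.
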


\begin{proof}
Again, let $E_1$ and $E_2$ be elliptic curves given by the Weierstrass equations $y^2 = x^3+A_1x+B_1$ and $y^2 = x^3+A_2x+B_2$ with $A_1,A_2,B_1,B_2\in\ZZ$. Further, let $P = (x_0,y_0)$ be the generator of a Galois stable cyclic subgroup of order $n$. Since $E_2$ is a quadratic twist of $E_1$, there is an isomorphism $\varphi:E_1\to E_2$ given by $\varphi((x,y)) = (u^2x,u^3y)$ for some $u$ with $u^2\in \QQ$. Because $u^2$ is rational, we know that $u = r\sqrt{D}$ for some squarefree integer $D$ and some $r \in \QQ$. We aim to show that $\langle \varphi(P) \rangle$ is Galois stable. 

Take $\sigma \in \Gal(\overline{\QQ}/\QQ)$. Since $\langle P \rangle$ is Galois stable we know there exists $m_\sigma\in\ZZ$ such that $P^\sigma = m_\sigma P$. Further, by basic Galois theory we know that $\sigma(u) = \pm u$, so 
\begin{align*}
\varphi(P)^\sigma 
&= (u^2x_0,u^3y_0)^\sigma = (\sigma(u^2x_0),\sigma(u^3y_0)) = (u^2\sigma(x_0),\pm u^3\sigma(y_0))\\
&=\varphi(\pm P^\sigma) = \varphi(\pm m_\sigma P) = \pm m_\sigma \varphi(P).
\end{align*}
Thus $\langle \varphi(P) \rangle$ is Galois stable and of order $n$ and $E_2$ has an $n$-isogeny.

\end{proof}

\subsection{Ranks of Quadratic Twists of Ellipitic Curves}

In the case when there are only finitely many $\overline{\QQ}$-isomorphism classes of elliptic curves with an $n$-isogeny the idea is to search for quadratic twists of a fixed representative that have moderate rank. Much effort has been put into the study of the ranks of quadratic twists of elliptic curves and there are many open questions about the distribution of ranks in families of quadratic twists of elliptic curves. Of particular importance is the following conjecture due to Goldfeld.
\begin{conj}
If $E/\QQ$ is an elliptic curve, then
$$
\lim_{X\to \infty} \frac{ \sum_{|d|\leq X,\ d\ \rm{squarefree}} \rank_{\QQ}(E^{(d)})}{\#\{d\in\ZZ: |d|\leq X\hbox{ $d$ \rm{squarefree}}\}}= \frac{1}{2}.
$$
\end{conj}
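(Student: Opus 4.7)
The final displayed statement is Goldfeld's conjecture on the average rank in a family of quadratic twists, which is a famous open problem in arithmetic statistics, so what follows is really a strategic sketch of the standard attack rather than a complete argument.

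The plan is to partition the squarefree $d$ with $|d|\leq X$ according to the sign of the functional equation of $L\bigl(E^{(d)},s\bigr)$. First I would verify that the root number $w\bigl(E^{(d)}\bigr)\in\{\pm 1\}$ depends only on $d$ modulo some integer $N$ built from the conductor of $E/\QQ$. An equidistribution argument on squarefree residues in arithmetic progressions then shows that, asymptotically, $50\%$ of the twists have $w=+1$ and $50\%$ have $w=-1$, so the parity of the analytic rank is balanced; under the Birch--Swinnerton-Dyer conjecture this already forces the parity of $\rank_\QQ\bigl(E^{(d)}\bigr)$ to be balanced as well.

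Next I would try to show that for $100\%$ of the twists with $w=+1$ the analytic rank is $0$, and for $100\%$ of those with $w=-1$ the analytic rank is $1$. For the $w=+1$ family, the standard tool is non-vanishing of the central value $L\bigl(E^{(d)},1\bigr)$ via mollified first- and second-moment computations in the style of Heath-Brown, Iwaniec, and Ono--Skinner; combined with Kolyvagin's theorem this upgrades analytic rank $0$ to algebraic rank $0$. For the $w=-1$ family, I would invoke Gross--Zagier together with a nonvanishing result for the associated Heegner points to obtain an algebraic rank of exactly $1$. Summing the two contributions would give the desired average of $\tfrac{1}{2}$; the matching lower bound is immediate from the parity input.

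The main obstacle, and the reason this remains a conjecture, is controlling the upper tail: one must rule out a nonnegligible proportion of $d$ for which $\rank_\QQ\bigl(E^{(d)}\bigr)\geq 2$. Even under GRH, the existing average-rank upper bounds are substantially larger than $\tfrac{1}{2}$, and unconditionally the strongest results (Heath-Brown for the congruent number family, Kane, and Smith) control only $2$-Selmer ranks and therefore bound the full Mordell--Weil rank only conditionally on appropriate finiteness of $\Sha$. A genuine proof seems to require either a decisive step toward BSD or a fundamentally new analytic handle on the second moment of $L\bigl(E^{(d)},1\bigr)$ uniformly in the twist.
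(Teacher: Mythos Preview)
The paper does not prove this statement at all: it is presented purely as a conjecture attributed to Goldfeld, with no accompanying argument, and is invoked only to motivate the remark that quadratic twists of rank~$\geq 2$ should be rare. You correctly identify the statement as Goldfeld's conjecture and explicitly flag that it is an open problem, so there is no discrepancy to point out between your proposal and the paper's own treatment.

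Your sketch of the standard programme---split by root number, use parity together with BSD, then try to force analytic rank $0$ or $1$ on a density-one subset via moment methods, Kolyvagin, and Gross--Zagier---is an accurate summary of the known landscape, and your diagnosis of the obstruction (controlling the contribution from twists of rank $\geq 2$, or equivalently the upper tail) is the right one. Just be aware that framing this as a ``proof proposal'' is a bit misleading: nothing here constitutes even a conditional proof, and the paper makes no attempt at one.
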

This conjecture together with the parity conjecture implies that for a fixed elliptic curve $E$ the ranks of quadratic twists of $E$ should be 0 half the time and 1 half the time. This makes finding twists of a fixed elliptic curve with rank greater than 1 a potentially difficult task and so the most obvious brute force method of checking the rank of every possible twist by a squarefree integer $D$ with $|D|$ less than some bound is not a viable approach. The potentially highly non-trivial nature of the task of computing the rank of an elliptic curve, too, makes the brute force method intractable, so we need a more subtle approach: we want a heuristic that will allow us to determine (via a quicker computation) which twists are most likely to have higher rank than the others. We will then perform the full rank computations on these heuristically indicated curves.

\subsection{The Method of Rogers} To do this, we use the method outlined in \cite{rogers}, where Rogers searches for twists of moderate rank of the congruent number elliptic curve given by $y^2 = x^3-x$. The method begins by letting $E/\QQ$ be an elliptic curve given by Weierstrass equation $y^2 = f(x)$ and then writing the elliptic curve $E^{(D)}$ in the nonstandard form $Dy^2 = f(x)$. With $E^{(D)}$ written this way, Gouv\^ea and Mazur observed in \cite{gouvea} that if $r\in\QQ$ and $D_r$ is the unique square-free integer such that $D_r \equiv f(r) \bmod (\QQ^\times)^2$, then the curve $E^{(D_r)}$ has a rational point whose $x$-coordinate equal to $r$.

The idea, suggested to Rogers by Rubin and Silverberg, is to fix a height bound $H$ and see which integers occur most often as $D_r$ for some $r \in \QQ$ with height less than $H$. The more times a given integer $D$ occurs, the more points of low (naive) height are present on the curve $E^{(D)}$. The twists of $E$ that have many such points should tend to have higher rank and so these twists are the ones whose rank it is most worthwhile to compute. While this is not always completely correct,\footnote{An elliptic curve of rank 1 might have many points of small naive height, while an elliptic curve of higher rank might not have any of the same height.} it does give us an efficient way to identify promising candidates in our search for twists of high rank. 

In the table below, we give the degree $n$ of the isogeny each curve has, the $j$-invariant as well as the $a$-invariants that define the curve with smallest conductor in the $\overline{\QQ}$-isomorphism class, the upper bound $H$ we used for the heights of the rational numbers, and the twists of maximal rank that we found. For each degree $n$, the highest rank found for a curve with an $n$-isogeny is highlighted in boldface type. In each case, we first performed a search using Rogers's method in SAGE, then used Magma to compute the Selmer ranks of the top 100 most frequently appearing twists, and finally computed the actual rank and generators of the most promising curves.

\begin{table}[h]
$$
\arraycolsep=1.4pt\def\arraystretch{1.1}
\begin{array}{c|c|c|c|c|c}
	j&n&\hbox{$a$-invariants of $E$} &H&D&\rank_{\QQ}\left(E^{(D)}\right)\\\hline
-11\cdot 131^3 &11 & [1, 1, 1, -30, -76] &  10^4 & -97966 & 4\\\hline
-2^{15} & 11 &[0, 0, 0, -9504, 365904]  & 10^4 & -64259 & 4\\\hline
 -11^2 & 11& [1, 1, 0, -2, -7]  & 10^4 & -1472782 & {\bf 5} \\\hhline{=|=|=|=|=|=}
 -3^3\cdot 5^3&14 &[0, 0, 0, -2835, -71442] & 10^4 &-883554 & 4\\\hline
3^3\cdot 5^3 \cdot 17^3 & 14 & [0, 0, 0, -595, -5586]& 10^4 & -4541835 & {\bf 5}\\\hhline{=|=|=|=|=|=}
-5^2/2 & 15 &[0, 0, 0, -675, -79650] & 10^4 & -1734994 & {\bf 5}\\\hline
-5^2\cdot 241^3/2^3 &15 &[0, 0, 0, -162675, -25254450] & 10^4 &-3163927 & { \bf 5}\\\hline
-5\cdot 29^3/2^5 & 15 &[0, 0, 0, -3915, 113670] & 10^4 & 1951555 & 4\\\hline
5\cdot 211^3/2^15 & 15 & [0, 0, 0, 28485, -838890] & 10^4 & -39947 & 4 \\\hhline{=|=|=|=|=|=}
-17^2\cdot 101^3/2 & 17 & [1, 0, 1, -3041, 64278] & 10^4 & 703 & {\bf 5} \\\hline
-17\cdot 373^3/2^{17} & 17 & [1, 0, 1, -3041, 64278] & 10^4 & 11951 & {\bf 5}\\\hhline{=|=|=|=|=|=}
-2^{15} \cdot 3^3 & 19 & [0, 0, 1, -38, 90] & 10^4 &182766 & {\bf 5}\\\hhline{=|=|=|=|=|=}
-3^2 \cdot 5^6/2^3 & 21 & [0, 0, 0, -75, 262] & 10^4 & 5107035 & {\bf 5}\\\hline
3^3\cdot 5^3/2 & 21 & [0, 0, 0, 45, -18] & 10^4 & -2606345 & {\bf 5} \\\hline
-3^2 \cdot 5^3\cdot 101^3/2^{21} & 21 & [0, 0, 0, -122715, -33611274] & 10^4 &-531894503 & {\bf 5} \\\hline
-3^3\cdot 5^3\cdot 383^3/2^7 & 21 & [0, 0, 0, -17235, 870894] & 10^4 & 4094 & 4 \\\hhline{=|=|=|=|=|=}
-2^{15}\cdot 3\cdot 5^3 & 27 & [0, 0, 0, -480, 4048] & 10^4 & 1058402 & {\bf 5} \\\hhline{=|=|=|=|=|=}
-7\cdot 11^3 & 37 & [1, 1, 1, -8, 6] & 10^4 & 21880474 & {\bf 5} \\\hline
-7\cdot 137^3\cdot 2083^3 & 37 &[0, 0, 0, -269675595, -1704553285050] & 5\cdot 10^3 & -3791 & 4 \\\hhline{=|=|=|=|=|=}
-2^{18}\cdot 3^3\cdot 5^3 & 43 & [0, 0, 0, -13760, 621264] & 10^4 & 18618 & {\bf 5}\\\hhline{=|=|=|=|=|=}
-2^{15}\cdot3^3\cdot 5^3\cdot 11^3 & 67 &[0, 0, 0, -117920, 15585808] & 10^4 & 37630& {\bf 5} \\\hhline{=|=|=|=|=|=}
-2^{18} \cdot 3^3 \cdot 5^3\cdot 23^3\cdot 29^3 & 163 &[0, 0, 0, -34790720, 78984748304] & 10^3 & 1 & {\bf 1}
\end{array}
$$
\caption{Twists in the positive genus case}
\label{table:positive}
\end{table}

In almost all cases, we were able to use $H=10^4$, but in two cases the size of the coefficients of the elliptic curves with smallest conductor in the $\overline{\QQ}$-isomorphism class prevented us from searching all the way out to $H = 10^4$ in a reasonable amount of time and we were instead forced to take $H = 5 \cdot 10^3$ or even $H = 10^3$. The difficulty here arises from the fact that the task of computing the squarefree part $D_{f(x)}$ of $f(x)$ is equivalent in complexity to the task of factoring $f(x)$. Indeed, for $n=163$ this search did not provide any elliptic curves of rank greater than that of the original (i.e., ``untwisted'') curve. We use a new method to deal with this particular case.

\subsection{A New Method for Finding Twists of Moderate Rank} 

Let $E/\QQ$ be an elliptic curve. If $p$ is a prime of good reduction for $E$, then let $N_p = \# E(\FF_p)$
and $a_p = p+1 - N_p$; otherwise, let 
$$a_p=\begin{cases}
1&\hbox{if $E$ has split multiplicative reduction at $p$},\\
-1&\hbox{if $E$ has non-split multiplicative reduction at $p$},\\
0&\hbox{if $E$ has additive reduction at $p$.}
\end{cases}
$$
A consequence of the Sato-Tate conjecture is that the proportion of primes for which $a_p$ is positive is equal to the proportion of primes for which $a_p$ is negative. With this in mind, Mazur in \cite{mazur2} considers the quantity
$$D_E(t) = \sum_{p\leq t}{\rm sgn}(a_p).$$
Given the Sato-Tate conjecture, one might expect that the values of $D_E(t)$ would be close to zero for large values of $t$, yet Mazur found experimentally that for curves of higher rank, $D_E(t)$ is more biased toward being negative. To illustrate this phenomenon, Mazur plots $(t,D_E(t))$ for $t \leq 10^6$ for the elliptic curves with Cremona labels 11A, 37A, 389A, and 5077A. These curves have rank 0, 1, 2, and 3 respectively, and Mazur observes that the larger the rank of the curve, the larger the bias is for $D_E(t)$ to be negative. 

One can make intuitive sense of this phenomenon with the following two observations: firstly, there should in general be about $p+1$ points on an elliptic curve $E/\FF_p$, and secondly, a curve with more rational points ought to have more points, on average, when reduced modulo a prime $p$. Thus the sign of $a_p$ tells us exactly when the reduction curve has more or fewer points than the expected average, and so the more frequently the number of points modulo $p$ exceeds the naive expectation of $p+1$, the more frequently ${\rm sgn}(a_p)$ will be negative and the more $D_E(t)$ will trend in the negative direction.

Looking at figures 2.2 -- 2.5 in \cite{mazur2}, we see that while $D_E(10^6)$ is a good indicator of the rank of $E$, a better indicator for these curves is $\min\{D_E(t) : t \leq 10^6\}$. Thus, if we wanted to use this to find twists of moderate rank, we could compute $\min\{D_{E'}(t) : t \leq 10^6\}$ for many twists $E'$ of $E$ and use the resulting values to pick twists on which to perform the full rank computation.

\begin{remark}
This method is computationally simpler than the method described above since there is no factoring necessary and counting the points on an elliptic curve over a finite field can be done very efficently. Furthermore, since at each step of the computation we are at most incrementing or decrementing the running total, we are able to compute the values in question quickly and without requiring much memory.

On the other hand, this method places an explicit bound on the magnitudes of the squarefree integers by which we are twisting the original curve. In the particular case in which we used it, for example, we only considered twists by integers up to $10^6$. Rogers's method, meanwhile, sets a bound on the heights of the rationals $x$ for which we compute the squarefree parts $D_{f(x)}$ of $f(x)$; while this imposes an implicit bound on the size of the twists considered, it is a much looser one, and twists by squarefree integers of much greater magnitude occur. For instance, note that in our search, in which we considered rationals $x$ of height up to $10^4$, one of the curves of rank 5 with a 21-isogeny was given by twisting by $-531894503$. This explicit restriction is, at least conjecturally, a real disadvantage: since the rank of an elliptic curve $E$ with conductor $N_E$ is conjectured to be bounded by $C \frac{\log N_E}{\log\log N_E}$ for some constant $C$,\footnote{For more details, see \cite[Conjecture 10.5]{ulmer}.} and since the conductor of $E^{(D)}$ is essentially $D^2\cdot N_E$, we want to allow twists by large integers in our search for twists of large rank.
\end{remark}

We take $E$ to be the minimal twist of the single $\overline{\QQ}$-isomorphism class of elliptic curves with 163-isogenies. We compute $\min\{D_{E^{(D)}}(t) : t \leq 10^5\}$ for every squarefree $D$ with $|D| \leq 10^6$ and then compute the ranks of the 100 most promising curves, producing a curve of rank 5, namely $E^{(D)}$ with $D = 376085$.

\begin{table}[h]
$$
\arraycolsep=1.4pt\def\arraystretch{1.2}
\begin{array}{c|c|c|c|c}
j&n&\hbox{$a$-invariants of $E$}&D&\rank_{\QQ}\left(E^{(D)}\right)\\\hline
-2^{18} \cdot 3^3 \cdot 5^3 \cdot 23^3 \cdot 29^3 & 163 & [0, 0, 0, -34790720, 78984748304] & 376085 & {\bf 5} \\
\end{array}
$$
\caption{A new method for the case $n = 163$}
\label{table:163}
\end{table}

\vspace{10pt}

\section{When the genus of $X_0(n)$ is zero.}

When the modular curve $X_0(n)$ has genus zero, we have that $X_0(n)(\QQ)\simeq \PP^1(\QQ)$ and that there are infinitely many rational points corresponding to $\overline{\QQ}$-isomorphism classes of rational elliptic curves with an $n$-isogeny. In this case, the $j$-invariants of these elliptic curves are given as the image of a rational map $j_n:X_0(n)\simeq\PP^1 \to \QQ.$ For example,
$$j_{13}(h) = \frac{(h^2+5h+13)(h^4+7h^3+20h^2+19h+1)^3}{h}.$$

Therefore we need an efficient way to determine for what values of $h\in \QQ$ the minimal twist of the $\overline{\QQ}$-isomorphism class of curves with $j$-invariant $j_n(h)$ is \emph{likely} to have large rank. To do this we will use a heuristic attributed to Nagao which concerns the $L$-function of an elliptic curve.

\subsection{$L$-functions of Elliptic Curves} 
Throughout this section let $E/\QQ$ be an elliptic curve. If $p$ is a prime, we define $N_p$ and $a_p$ as above.   

\begin{definition}
For any prime $p$, the {\it local factor at $p$ of the $L$-series} is defined to be 
$$L_p(T) = \begin{cases}
1-a_pT+pT^2, \hbox{if $E$ has good reduction at $p$},\\
1-T, \hbox{if $E$ has split multiplicative reduction at $p$},\\
1+T, \hbox{if $E$ has non-split multiplicative reduction at $p$},\\
1, \hbox{if $E$ has additive reduction at $p$}.\\
\end{cases}$$
The $L$-function of the elliptic curve $E$ is defined to be 
$$L(E,s) = \prod_{p\geq 2}\frac{1}{L_p(p^{-s})},$$
where the product is taken over all primes $p.$
\end{definition}

The importance of $L$-functions derives principally from the following conjecture, which connects the algebraic structure of an elliptic curve with the behavior of its associated $L$-function, giving an analytic method for computing the rank of the curve.

\begin{conj}[Birch and Swinnerton-Dyer]
Let $E$ be an elliptic curve over $\QQ$, and let $L(E,s)$ be the $L$-function of $E$. Then $L(E,s)$ has a zero at $s=1$ of order equal to $\rank_\QQ(E)$.
\end{conj}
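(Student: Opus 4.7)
The Birch and Swinnerton-Dyer conjecture is an open problem of considerable renown, one of the seven Millennium Prize Problems, so I will sketch the approach that has proven most successful in special cases while being candid that a full proof lies well beyond current techniques. The plan is to split the statement into two tasks: first, showing that $L(E,s)$ admits an analytic continuation to a neighborhood of $s=1$ so that the order of vanishing at $s=1$ is even well-defined, and second, matching that order to $\rank_\QQ(E)$.

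For the first task, I would invoke the modularity theorem of Wiles, Taylor--Wiles, and Breuil--Conrad--Diamond--Taylor, which attaches to $E/\QQ$ a weight-$2$ newform $f_E$ of level $N_E$ with $L(E,s) = L(f_E,s)$. Hecke's theory of modular $L$-functions then supplies analytic continuation to all of $\CC$ and a functional equation relating $s$ to $2-s$, making $\mathrm{ord}_{s=1}L(E,s)$ a bona fide invariant of $E$.

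For the second task, I would attempt to prove the two inequalities separately using the theory of Heegner points. The Gross--Zagier formula relates the Néron--Tate height of a Heegner point on $E$ to the central derivative $L'(E,1)$, and Kolyvagin's Euler system argument then shows that if $L(E,1)\neq 0$ then $\rank_\QQ(E)=0$ and $\Sha(E/\QQ)$ is finite, and if $L(E,1)=0$ while $L'(E,1)\neq 0$ then $\rank_\QQ(E)=1$ and again $\Sha(E/\QQ)$ is finite. This settles the conjecture in the analytic-rank $0$ and $1$ cases, which suffices for many applications but not for the curves of rank $5$ produced in the tables above.

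The hard part, and the obstacle at which I would run aground, is the case of analytic rank at least $2$. There is essentially no known method for producing enough independent rational points from the vanishing of higher derivatives of $L(E,s)$: Heegner points live on a single modular parametrization and degenerate to torsion once $L'(E,1)$ vanishes, and conversely there is no known mechanism for bounding $\mathrm{ord}_{s=1}L(E,s)$ from below in terms of explicit rational points. Until some new construction (perhaps via $p$-adic $L$-functions, Iwasawa main conjectures in higher rank, or a higher-order Euler system) becomes available, the BSD conjecture for the curves of rank $5$ obtained via the methods of this paper must remain genuinely conjectural, which is precisely why the paper will next turn to Nagao's heuristic as a computationally tractable proxy for $L$-function behavior.
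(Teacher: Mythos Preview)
Your response is appropriate: the statement is labeled a \emph{conjecture} in the paper, and the paper offers no proof of it whatsoever. Immediately after stating it, the authors remark that the full BSD conjecture says more (the leading-coefficient formula), refer the reader to \cite{RubinSilverberg} and \cite{ALR2}, and then proceed to use the conjecture purely as motivation for Nagao's heuristic $S(E,N)$. There is nothing to compare your attempt against.

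That said, your write-up is an accurate summary of the state of the art: modularity gives analytic continuation so that $\mathrm{ord}_{s=1}L(E,s)$ is well-defined, and the Gross--Zagier/Kolyvagin package settles analytic ranks $0$ and $1$, with nothing comparable known for rank $\geq 2$. Your closing observation---that the rank-$5$ curves produced in this paper lie squarely in the regime where BSD is genuinely open, which is exactly why the authors fall back on Nagao's truncated log-derivative as a computational proxy---is on point and anticipates the paper's next move correctly. The only thing to flag is that you should not frame this as a ``proof proposal'' at all: the honest statement is simply that no proof exists, and your sketch is a survey of partial results rather than an argument with a gap to be filled.
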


In fact, the Birch and Swinnerton-Dyer conjecture says more than what is stated above. It also gives a formula for the residue of $L(E,s)$ at $s=1$ in terms of various invariants of $E$, but for our purposes this weaker version is sufficient. For more information, the reader is encouraged to see \cite{RubinSilverberg} or \cite{ALR2}.

Assuming this conjecture, we can write $L(E,s) = (s-1)^{R_E} \cdot g(E,s)$, where $g(E,s)$ is some function such that $g(E,1)\neq 0 $ and $R_E = \rank_\QQ(E)$. Computing the log derivative of $L(E,s)$ yields 
$$\frac{L'(E,s)}{L(E,s)} = R_E\cdot\frac{1}{s-1}+h(E,s)$$
where $h(E,s) = g'(E,s)/g(E,s)$ is analytic close to $s=1$. Therefore, 
$$\lim_{s\to 1} \frac{L'(E,s)}{L(E,s)} = R_E\lim_{s\to 1} \frac{1}{s-1} + h(1),$$
and the rank of $E$ can be estimated using the rate at which the limit goes to infinity.

The problem with this estimation is that computing the $L$-function of an elliptic curve is time consuming if it is possible at all. So Nagao defined the following finite product,
$$L_N(E,s) = \prod_{p\leq N} (1-a_pp^{-s}+p^{1-2s})^{-1},$$
as an approximation to the $L$-function $L(E,s)$.

The products $L(E,s)$ and $L_N(E,s)$ have two main differences. The most apparent discrepancy is that $L_N(E,s)$ is truncated to be a finite product rather than an infinite one. The other difference is that $L_N(E,s)$ ignores the deviation between the local factors of primes of good and bad reduction. Treating every prime as if it were a prime of good reduction makes computing the truncated $L$-function for \emph{many} different elliptic curves easier while only changing a finite number of terms in each product. Thus, $L(E,s)$ and $\displaystyle \lim_{N\to\infty} L_N(E,s)$ should still be closely related. 

Next, taking the logarithm of $L_N(E,s)$, defining the resulting sum to be $f_N(E,s)$,
%$$f_N(E,s) = \log L_N(E,s) = -\sum_{p\leq N}\log(1-a_p p^{-s}+p^{1-2s}),$$
and taking the derivative shows that
$$f_N'(E,s) = - \sum_{p\leq N} \frac{a_p p^{-s}-2p^{1-2s}}{1-a_pp^{-s}+p^{1-2s}}\log p.$$
If we expect $L(E,s)$ and $\displaystyle \lim_{N\to\infty} L_N(E,s)$ to be closely related, we should also expect $\displaystyle \lim_{N\to \infty}f'_N(E,s)$ to be a good approximation to the log derivative of ${L(E,s)}$, and so
$$\lim_{s\to 1}\frac{L'(E,s)}{L(E,s)}\approx \lim_{s\to1}\lim_{N\to\infty}f_N'(E,s).$$
Finally, since our goal is to arrive at a rough heuristic, we switch the limits on the right-hand side without justification to get 
$$\lim_{s\to 1}\frac{L'(E,s)}{L(E,s)}\approx \lim_{N\to\infty}f_N'(E,1)=\lim_{N\to\infty} -\sum_{p\leq N} \frac{a_p p^{-1}-2p^{-1}}{1-a_pp^{-1}+p^{-1}}\log p = \lim_{N\to\infty}\sum_{p\geq N}\frac{2-a_p}{N_p}\log(p).$$
Therefore, if we let 
$$\displaystyle S(E,N) = f_N(E,1) = \sum_{p\geq N}\frac{2-a_p}{N_p}\log(p),$$ we should be able to use $S(E,N)$ for some sufficiently large $N$ to distinguish between elliptic curves of large and small rank. That is to say, when $S(E,N)$ is relatively large, we have reason to suspect that $E$ itself has relatively large rank. 

\begin{remark}
The heuristic we use here was first used by Nagao in \cite{nagao} where he used this heuristic and a construction of Mestre to generate an elliptic curve of rank greater than or equal to 20. It was then used again by Watkins, Donnelly, Elkies, Fisher, Granville, and Rogers as described in \cite{watkins}. One could form many slightly different sums over small primes that all ought to correlate with the rank of a fixed elliptic curve. For a more detailed discussion of these variations and their respective advantages and disadvantages, see \cite[\S 5]{watkins}.
\end{remark}

For each $r\in\QQ$ with height less than some bound $H$, we let $E_{j_n(r)}$ be the minimal model of the elliptic curve with $j$-invariant $j_n(r)$ and use SAGE to compute $S(E_{j_n(r)},N)$ for some fixed ``large'' $N$. Then we use Magma to compute the Selmer ranks and actual ranks of the elliptic curves $E$ for which $S(E,N)$ is largest.

For $n = 13$ and $16$ we let $N = 10^4$ and $H=200$ to get the following table of data.

\begin{table}[h]
$$
\arraycolsep=1.4pt\def\arraystretch{1.2}
\begin{array}{c|c|c|c}
 n & r & \hbox{$a$-invariants} & \rank_\QQ\left(E_{j_n(r)}\right)\\\hline
 13 & 101/60 & [1, 0, 1, -288446980524976098, 59625981683441978523832756] & {\bf 6} \\\hline
 13 & -113/20 & [ 0, 0, 0, -452597990381667, 3706144386347090080926] & 5 \\\hline
 13 & 9/8 & [1, 1, 1, -8657699123, 309987918855281] & 4 \\\hhline{=|=|=|=}
 16 & 187/40 & [1, 1, 1, -30111109596200490, 201112119279475857520645] & 4 \\\hline
 16 & 77/80 & [ 1, 1, 1, -105028852953910, 11925868172151407627315 ] & 3 \\\hline
 16 & 167/114 & [ 1, 0, 0, 21676796948537980, 1144823745894014005574160 ] & 2 \\
\end{array}
$$
\caption{Searching in the genus zero case with the Nagao heuristic}
\label{table:zero_Nagao}
\end{table}

However, a problem arises when we try and use this method for $n= 18$ and $25$. While it is possible to use the heuristic above to compute the elliptic curves that \emph{should} have high rank, the conductors of these curves are so large that Magma is unable to compute their Selmer rank, let alone actual rank, in a reasonable amount of time. In order to find elliptic curves of moderate rank with isogenies of degree 18 or 25, we need a different approach, so we revert to the method of Rogers outlined in Section~\ref{sec-PositiveGenus}. The idea is to use \cite{lmfdb} to find the elliptic curves with minimal conductor among those with an 18-isogeny and among those with a 25-isogeny. We then use Rogers's method to look for twists of these curves that have moderate rank. In fact, we also applied this method to the cases of $n = 13$ and 16 to see how the two methods compare. Below we give a table of the results obtained from these searches.

\begin{table}[h]
$$
\arraycolsep=1.4pt\def\arraystretch{1.2}
\begin{array}{c|c|c|c|c|c}
j&n&\hbox{$a$-invariants of $E$} &H&D&\rank_{\QQ}\left(E^{(D)}\right)\\\hline
-1 \cdot 2^{12} \cdot 7/3 & 13 & [0, -1, 1, -2, -1] & 10^4 & 70557 & 4 \\\hline 
-1 \cdot 3^3 \cdot 7/2 & 13 & [1, -1, 0, -2, 6] & 10^4 & -1049 & 4 \\\hhline{=|=|=|=|=|=}
-1/15 & 16 & [1, 1, 1, 0, 0] & 10^4 &  -1625560485 &  6 \\\hline 
5281^3/(3^4 \cdot 5 \cdot 13) & 16 & [1, 0, 0, -110, 435] & 10^4 & 59243810 & {\bf 7} \\\hhline{=|=|=|=|=|=}
-1 \cdot 5^6 /(2^2\cdot 7) & 18 & [1, 0, 1, -1, 0] & 10^4 &  -10533149 & {\bf 6} \\\hhline{=|=|=|=|=|=}
-1 \cdot 2^12 / 11 & 25 & [0, -1, 1, 0, 0] & 10^4 & -203145767 & {\bf 6} \\\hline 
-1 \cdot 269^3 / (2\cdot 11)& 25  & [1, 1, 1, -28, -69] & 10^4 &  4817182 & {\bf 6} \\
\end{array}
$$
\caption{Twists in the genus zero case}
\label{table:zero_Rogers}
\end{table}

For each value of $n$ we considered---that is, for each $n$ appearing in Theorem~\ref{thm-isog} besides those for which there exist elliptic curves with rational points of order $n$---our searches produced an elliptic curve of rank at least 5 with an $n$-isogeny. We would like to extend this baseline minimum of rank 5 to all possible isogeny degrees, so we now turn our attention briefly to the others, namely $2 \leq n \leq 10$ and $n = 12$. For $2 \leq n \leq 8$, there are known examples of elliptic curves of rank at least 5 that have torsion group isomorphic to $\ZZ/n\ZZ$ (see \cite{Dujella}) and thus have an $n$-isogeny. This leaves the three cases of $n = 9$, $n =10$ and $n = 12$. In each case, the highest known rank of a curve with torsion group isomorphic to $\ZZ/n\ZZ$ is only 4. However, the curve of rank 6 with an 18-isogeny that our search found also has a 9-isogeny,\footnote{To see this, let $E$ be the curve in question and take $P$ to be a generator of the Galois stable cyclic subgroup $G \subseteq E(\overline{\QQ})$ of order 18, and consider the subgroup generated by $2P$.} so we have a curve of rank at least 5 with a 9-isogeny.

For $n = 10$ and $n = 12$, we conducted searches using the method of Rogers, producing the following data.

\begin{table}[h]
$$
\arraycolsep=1.4pt\def\arraystretch{1.2}
\begin{array}{c|c|c|c|c|c}
j&n&\hbox{$a$-invariants of $E$} &H&D&\rank_{\QQ}\left(E^{(D)}\right)\\\hline
179^3 \cdot 2699^3 / (2^2 \cdot 3 \cdot 11^5) & 10 & [1, 0, 0, -10065, -389499] & 10^4 & -802314609 & {\bf 5} \\\hline 
-11^3 \cdot 59^3 / (2^12 \cdot 3 \cdot 5^3) & 12 & [1,0,1,-14,-64] & 10^4 & -148243395 & {\bf 5} \\
\end{array}
$$
\caption{Twists for $n = 10$ and 12}
\label{table:10_and_12}
\end{table}

The data in Tables~\ref{table:positive}, \ref{table:163}, \ref{table:zero_Nagao}, \ref{table:zero_Rogers}, and \ref{table:10_and_12}, together with the rank records recorded by Dujella at \cite{Dujella}, justify the following result.

\begin{theorem}
If $n$ is a positive integer appearing in Theorem \ref{thm-isog}, then there exists an elliptic curve with an $n$-isogeny and rank greater than or equal to 5.
\end{theorem}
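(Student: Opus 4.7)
My plan is to prove the theorem by a case-by-case verification that walks through every $n$ appearing in Theorem~\ref{thm-isog}, drawing on the data already assembled in Tables~\ref{table:positive}, \ref{table:163}, \ref{table:zero_Nagao}, \ref{table:zero_Rogers}, and \ref{table:10_and_12}, together with the rank records at \cite{Dujella}. For each $n$ I would cite a specific elliptic curve $E/\QQ$ possessing an $n$-isogeny and exhibit the fact that $\rank_\QQ(E)\geq 5$.

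First I would split the list from Theorem~\ref{thm-isog} into three groups. The trivial case $n=1$ is handled by any elliptic curve over $\QQ$ of rank at least $5$ (Elkies's rank-$28$ curve being the extreme example). For $2\leq n\leq 8$, I would invoke Dujella's tables: in each of these cases there is a known elliptic curve with torsion subgroup $\ZZ/n\ZZ$ and rank at least $5$, and any rational point of order $n$ generates a cyclic $\Gal(\overline{\QQ}/\QQ)$-stable subgroup of order $n$, hence yields an $n$-isogeny. The remaining cases $n\in\{9,10,11,12,13,14,15,16,17,18,19,21,25,27,37,43,67,163\}$ are exactly those handled in the body of the paper, so the theorem reduces to verifying that the tables above collectively hit every one of these values with a rank at least $5$.

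Next I would run through the remaining values of $n$ one at a time, pointing to the appropriate row of the appropriate table. Table~\ref{table:positive} supplies rank-$5$ twists for $n\in\{11,14,15,17,19,21,27,37,43,67\}$; Table~\ref{table:163} supplies the rank-$5$ twist for $n=163$; Table~\ref{table:zero_Nagao} supplies a rank-$6$ curve for $n=13$; Table~\ref{table:zero_Rogers} supplies rank-$7$, rank-$6$, and rank-$6$ curves for $n=16,18,25$ respectively; and Table~\ref{table:10_and_12} supplies rank-$5$ twists for $n=10$ and $n=12$. The only cases not directly listed are $n=9$, which I would deduce from the rank-$6$ curve with an $18$-isogeny in Table~\ref{table:zero_Rogers}: if $G\subseteq E(\overline{\QQ})$ is the cyclic $\Gal(\overline{\QQ}/\QQ)$-stable subgroup of order $18$, then its unique subgroup of order $9$ (namely $2G$) is also Galois-stable and cyclic, giving the required $9$-isogeny on a curve of rank $6$.

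The main obstacle is really not conceptual but bookkeeping: I must make sure that no value of $n$ in Theorem~\ref{thm-isog} is left out and that for each $n$ the cited elliptic curve is verified to (i) have a $\QQ$-rational $n$-isogeny and (ii) have Mordell--Weil rank at least $5$ over $\QQ$. For the quadratic twists in Tables~\ref{table:positive}, \ref{table:163}, \ref{table:zero_Rogers}, \ref{table:10_and_12} the isogeny assertion is immediate from the preceding proposition that quadratic twisting preserves isogeny degrees; for the rows drawn from Table~\ref{table:zero_Nagao} it follows from the construction via the parametrization $j_n$; and for the curves imported from Dujella's database the isogeny assertion is built into the fact that they were found as elliptic curves with prescribed torsion subgroup $\ZZ/n\ZZ$. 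Once this checklist is complete, the proof is finished.
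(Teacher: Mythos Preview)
Your proposal is correct and is exactly the approach the paper takes: the paper's ``proof'' is simply the one-sentence remark that the data in Tables~\ref{table:positive}, \ref{table:163}, \ref{table:zero_Nagao}, \ref{table:zero_Rogers}, and \ref{table:10_and_12}, together with Dujella's rank records, justify the result. Your write-up is in fact more explicit than the paper's, including the same reduction of $n=9$ to the $18$-isogeny curve via the subgroup $2G$ that the paper handles in a footnote.
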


\section{Acknowledgements}
The authors would like to thank Filip Najman for suggesting this project and \'Alvaro Lozano-Robledo and Steven J. Miller for helpful conversations throughout the process as well as the referee and editors for their useful comments and a quick editorial process. Both authors would also like to thank the Amherst College Department of Mathematics and Statistics for its support of the undergraduate thesis project during which this work was carried out.

\end{document}